\documentclass{amsart}
\usepackage{color, lineno}
\usepackage[hidelinks]{hyperref}
\usepackage{amsmath, amssymb, amsfonts, amstext, amsthm, textcomp, comment,bbm}
\usepackage{nccmath,enumitem}
\usepackage[left=3.5cm, right=3.5cm]{geometry}
\usepackage{chngcntr}
\usepackage{float}
\usepackage{tikz, pgfplots}
\pgfplotsset{compat=1.18}
\usetikzlibrary{calc,angles,positioning,intersections,quotes,decorations.markings}
\usepackage{tkz-euclide}

\newtheorem{theorem}{Theorem}[section]
\newtheorem{lemma}[theorem]{Lemma}

\newtheorem{proposition}[theorem]{Proposition}

\theoremstyle{definition}
\newtheorem{definition}[theorem]{Definition}
\newtheorem{example}[theorem]{Example}

\theoremstyle{remark}
\newtheorem{remark}[theorem]{Remark}
\numberwithin{equation}{section}

\newcommand{\N}{\mathbb{N}}

\newcommand{\R}{\mathbb{R}}

\newcommand{\calH}{\mathcal{H}}

\newcommand{\suchthat}{\;\ifnum\currentgrouptype=16 \middle\fi|\;}

\begin{document}

\title{Invertible positive maps that are not automorphisms}

\author{Pavankumar Raickwade}
\address{Department of Mathematics, Indian Institute of Technology Madras, Chennai, Tamil Nadu, India}
\curraddr{}
\email{prraickwade@gmail.com}
\thanks{}

\author{K. C. Sivakumar}
\address{Department of Mathematics, Indian Institute of Technology Madras, Chennai, Tamil Nadu, India}
\curraddr{}
\email{kcskumar@iitm.ac.in}
\thanks{}

\subjclass[2010]{Primary 47B60; Secondary 47B65}

\keywords{Positive maps, automorphisms, cones, partially ordered vector spaces}

\date{}

\dedicatory{}
            
 

\begin{abstract}
    Let $X$ be a real normed vector space with a cone $K\subseteq X$ satisfying either (i) $K$ is closed with non-empty interior or (ii) $K$ has non-zero extremals or (iii) $K$ is closed and $X$ is a Banach space. In this short note, we provide a method to construct an invertible linear map $T\colon X\to X$ such that $T[K]\subseteq K$ but $T^{-1}[K]\not\subseteq~K$. In particular, we show that, for every cone automorphism $S\colon X\to X$, there exists a rank one perturbation of $S$ which is positive and invertible, but does not have a positive inverse. We provide examples from four diverse situations. 
\end{abstract}

    
\maketitle

\section{Introduction}

Throughout this article, $X$ denotes a real vector space and $X'$ denotes the algebraic  dual of $X$, i.e. the space of all linear functionals on $X$. The space of all invertible (bijective) linear operators on $X$ is denoted by $L_{inv}(X)$.  A subset $K\subseteq X$ is called a {\it cone} if $K$ is closed under addition, $\lambda K\subseteq K$ for every $\lambda \geq 0$ and $K\cap (-K)=\{0\}$. A linear map $T\colon X\to X$ is said to be {\it positive} (with respect to the cone $K$) if $T[K]\subseteq K$. The set of all positive operators is denoted by $\pi(K)$. Cones and positive operators have been extensively investigated, see for example, \cite{DBLP:books/siam/BermanP94, positivity_book_Aliprantis}. An invertible linear map $T\colon X\to X$ such that both $T$ and $T^{-1}$ are positive is called a {\it cone automorphism}. The set of all cone automorphisms is denoted by $Aut(K)$. 

The problem of characterizing cone automorphisms has been considered by many authors for several examples of closed cones. For instance, the Lorentz cone in finite dimensional spaces is considered in \cite{Sznajder2024-ju}, the cone of copositive matrices is considered in \cite{shitov2019linearmappingspreservingcopositive} and independently in \cite{GOWDA20133862}, the general theory of group of automorphisms in a finite dimensional space is covered in \cite{HORNE1978111}, etc. When studying rank one perturbations of automorphisms in certain select examples, it was observed that the resulting operators are positive, invertible but their inverses were not positive. Motivated by this observation we set out to study the general problem. 

In this article we provide a method to construct a linear map $T\in \pi(K)\cap L_{inv}(X)\setminus Aut(K)$ for a cone $K\subseteq X$. More specifically, we show, in Theorem~\ref{thm::main} that, for a closed cone $K$ with non-empty interior (when $X$ has a norm) and for any $S\in Aut(K)$, there exists a rank one perturbation of $S$ which is positive, invertible but not an automorphism. Analogous results are obtained in Theorem~\ref{thm::ext_non_empty} (for cones possessing nonzero extremals) and in Theorem~\ref{thm::Banach-space-case} (for closed cones in Banach spaces). We provide many illustrative examples.

\section{Preliminaries}

Let $X$ be a vector space. A partial order $\leq$ on $X$ is called a {\it vector space order} if the following compatibility relations hold: 
\begin{align*}
	\forall x,y,z \in X&\colon \ x \leq y \Longrightarrow x+z\leq y+z,\\
	\forall x,y\in X, \lambda\geq 0&\colon \ x \leq y \Longrightarrow \lambda x\leq \lambda y.
\end{align*}
In this case, we call $X$ an {\it ordered vector space}\footnote{In some literature, `ordered vector space' corresponds to a total order. However, we consider only partially ordered vector spaces, and prefer the terminology `ordered vector space' over `partially ordered vector space'.}. Recall that, a set $K\subseteq X$ is called a {\it cone} if $K$ is closed under addition, $\lambda K\subseteq K$ for every $\lambda \geq 0$, and $K\cap (-K)=\{0\}$. 
If $\leq$ is a vector space order on $X$, then  
\begin{equation} 
	\label{eq:cone}
	X_+:=\{x \in X\mid x\geq 0\}
\end{equation} is a cone. On the other hand, if $K\subseteq X$ is a cone, then a vector space order on $X$ is given by \begin{equation}\label{cone_order_relation}
    x\leq y ~:\Longleftrightarrow~  y-x\in K,
\end{equation}
and for $X_+$ as in \eqref{eq:cone}  one obtains $X_+=K$. Hence, there is a one-to-one correspondence between cones and vector space orders, and we denote an ordered vector space by $(X,X_+)$ in case we want to specify the corresponding cone. If $X$ has a norm $\|\cdot\|$ then $(X, X_+, \|\cdot \|)$ is called an {\it ordered normed space}. If $X$ is a Banach space then $(X, X_+, \|\cdot\|)$ is called an {\it ordered Banach space}.  The articles \cite{BARKER1981263,cones_and_duality, Anke_book} are excellent sources for information on cones and partially ordered vector spaces.

\begin{definition}
    Let $(X,X_+)$ be an ordered vector space. A linear map $T\colon X\to X$ is called
\begin{itemize}
    \item [-]
        {\it positive} if $x\in X_+ \Longrightarrow Tx\in X_+$. 
    \item [-]
        {\it automorphism} if $T$ is bijective and both $T, T^{-1}$ are positive.
\end{itemize}
\end{definition}

For a cone $K\subseteq X$, define the set $K':=\{f\in X'\mid \forall x\in K;~f(x) \geq 0\}$. The following result is well known, and will be useful in the sequel.
\begin{proposition}[{\cite[Proposition 1.5.5]{Anke_book}}]\label{prop::xinKifff(x)>0}
    Let $(X,K,\|\cdot\|)$ be an ordered normed space with $K$ being closed. Then, $x\in K$ if and only if $f(x)\geq 0$ for every $f\in K'$.
\end{proposition}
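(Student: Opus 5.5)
The statement is Proposition~\ref{prop::xinKifff(x)>0}: for a closed cone $K$ in a normed space $X$, $x\in K$ if and only if $f(x)\geq 0$ for every $f\in K'$. The forward implication is immediate from the definition of $K'$. If $x\in K$ and $f\in K'$, then $f(x)\geq 0$ by definition.

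For the converse I argue by contraposition with a separating hyperplane. Suppose $x_0\notin K$. The set $K$ is convex, since it is closed under addition and nonnegative scalar multiples, so $\frac12 a+\frac12 b\in K$ for $a,b\in K$. It is also closed and nonempty, as $0\in K$. The singleton $\{x_0\}$ is compact and convex, and it is disjoint from $K$.

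By the Hahn--Banach separation theorem for a closed convex set and a disjoint compact convex set in a normed (hence locally convex) space, there exist a continuous linear functional $g$ and a real number $c$ with
\[
g(x_0)<c\leq g(y)\quad\text{for all } y\in K.
\]

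The only step needing care is turning this affine separation into an element of $K'$ that is negative at $x_0$. I use the cone structure. Taking $y=0$ gives $c\leq 0$, so $g(x_0)<0$. If $g(y)<0$ for some $y\in K$, then $\lambda y\in K$ for all $\lambda\geq 0$, and $g(\lambda y)=\lambda g(y)\to-\infty$, contradicting $g\geq c$ on $K$. Hence $g(y)\geq 0$ on $K$, so $g\in K'$, with $K'$ taken inside the algebraic dual $X'$, which contains $g$. Since $g(x_0)<0$, this contradicts the hypothesis. Therefore $f(x)\geq0$ for all $f\in K'$ forces $x\in K$.

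Note that closedness of $K$ is used only to invoke the strict separation theorem. The pointedness condition $K\cap(-K)=\{0\}$ is not needed.
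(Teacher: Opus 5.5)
Your proof is correct. The paper gives no proof of its own and only cites \cite[Proposition 1.5.5]{Anke_book}; the standard proof of that result is the same argument you give: separate $\{x_0\}$ from the closed convex set $K$ by Hahn--Banach, then use $0\in K$ and scaling to turn the separating functional into an element of $K'$ that is negative at $x_0$. You also correctly note that the continuous functional you obtain lies in the paper's $K'$, which is taken inside the algebraic dual.
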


Let $X$ be an ordered vector space. An element $x\in K$ is called an {\it extremal} if, for every $0\leq z\leq x$, there exists $0\leq\lambda\leq 1$ such that $z=\lambda x$. Note that every positive vector below an extremal is also an extremal. The set of all extremals of a cone $K$ is denoted by $\text{ext}(K)$.  We use the following result repeatedly.

\begin{lemma}\label{lem::inv_image_of_extremal_is_extremal}
    Let $T\colon X\to X$ be an injective, positive linear map. If $T(x)$ is an extremal of $T[K],$ then $x$ is an extremal of $K$. In particular, if $T\in Aut(K)$, then $Tx \in \emph{ext}(K)$ if and only if $x \in \emph{ext}(K).$
\end{lemma}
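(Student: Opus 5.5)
The plan is to unwind the definitions and pull everything back along $T$, using injectivity at the one point where it is needed.

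Suppose $Tx$ is an extremal of the cone $T[K]$. First note that $T[K]$ is indeed a cone. It is closed under addition and nonnegative scaling because $T$ is linear. If $y\in T[K]\cap(-T[K])$, then $y=Tk_1=-Tk_2$ with $k_1,k_2\in K$, so $T(k_1+k_2)=0$. Injectivity gives $k_1=-k_2\in K\cap(-K)=\{0\}$, hence $y=0$. Since $Tx\in T[K]$ and $T$ is injective, we also get $x\in K$.

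Now take $z$ with $0\le z\le x$ in the order of $K$, so that $z\in K$ and $x-z\in K$. Applying $T$ gives $Tz\in T[K]$ and $Tx-Tz=T(x-z)\in T[K]$. Thus $0\le Tz\le Tx$ in the order induced by $T[K]$. Extremality of $Tx$ then provides $\lambda\in[0,1]$ with $Tz=\lambda Tx=T(\lambda x)$, and injectivity yields $z=\lambda x$. Hence $x\in\text{ext}(K)$.

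For the ``in particular'' part, let $T\in Aut(K)$. Then $T[K]\subseteq K$ and $T^{-1}[K]\subseteq K$, so $K\subseteq T[K]$, and therefore $T[K]=K$.
\begin{itemize}
\item If $Tx\in\text{ext}(K)=\text{ext}(T[K])$, the first part gives $x\in\text{ext}(K)$.
\item Conversely, if $x\in\text{ext}(K)$, apply the first part to the injective positive map $T^{-1}$ (which satisfies $T^{-1}[K]=K$) with the vector $Tx$. Since $T^{-1}(Tx)=x$ is extremal in $K$, it follows that $Tx\in\text{ext}(K)$.
\end{itemize}

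There is no serious obstacle. The only points needing care are that positivity of $T$ is used only to make sense of the ``in particular'' part (where it gives $T[K]=K$), and that injectivity is used both to see that $T[K]$ is a cone and to cancel $T$ in $Tz=T(\lambda x)$.
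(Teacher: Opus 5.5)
Your proposal is correct and follows the paper's proof: you pull $0\le z\le x$ forward to $0\le Tz\le Tx$, use extremality of $Tx$ in $T[K]$, cancel $T$ by injectivity, and obtain the automorphism case by applying the first part to both $T$ and $T^{-1}$. Beyond the paper, you make explicit a few points it leaves tacit: that $x\in K$, that the comparison $0\le Tz\le Tx$ is in the $T[K]$-order, and that $T[K]=K$ for $T\in Aut(K)$. You also get pointedness of $T[K]$ from injectivity, whereas the paper uses positivity ($T[K]\subseteq K$); either argument works.
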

\begin{proof}
    Since $T$ is positive, $T[K]$ is also a cone. Let $T(x)$ be an extremal of $T[K]$ and $0\leq z\leq x$. Then $0\leq T(z)\leq T(x)$. Hence, $T(z)=\lambda T(x)$ for some $\lambda\geq 0$, and so $z=\lambda x$. Thus, $x$ is an extremal of $K$, proving the first part. The second part follows by applying the first, to both $T$ and $T^{-1}$.
\end{proof}

\begin{definition}
    Let $(X,X_+)$ be an ordered vector space. An element $u\in X_+\setminus\{0\}$ is called an {\it order unit} if, for  every $x\in X$, there is $\lambda\in (0,\infty)$ such that $x\leq \lambda u$.
\end{definition}

The following results are well known. We provide proofs for the ready reference.
\begin{proposition}\label{prop::int--o.u.}
    Let $(X,K, \|\cdot\|)$ be a  ordered normed vector space with $\text{int}(K)\neq \emptyset$.
    \begin{enumerate}[label=\upshape(\roman*)]
        \item \label{prop::int--o.u.-(i)}
            An element $u\in K\setminus \{0\}$ is an order unit if and only if $u\in \text{int}(K)$.
        \item \label{prop::int--o.u.-(v)}
            Let $f\in K'\setminus \{0\}$ and $u\in \text{int}(K)$. Then $f(u)>0$.
        \item \label{prop::int--o.u.-(ii)}
            Let $u\in \text{int}(K)$ and $v\in K$ such that $v\geq u$. Then $v\in \text{int}(K)$.
        \item \label{prop::int--o.u.-(iv)}
             Let $u\in \text{int}(K)$ and $\alpha>0$. Then $\alpha u\in \emph{int}(K)$,
        \item \label{prop::int--o.u.-(iii)}
            Let $S\in Aut(K)$. Then $S(u)\in \text{int}(K)$ if and only if $u\in \text{int}(K)$
    \end{enumerate}
\end{proposition}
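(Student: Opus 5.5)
The five parts are handled in order; (i) and (iii) carry the real content, and (ii), (iv), (v) then follow quickly. Note that $K$ is not assumed closed.

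For \ref{prop::int--o.u.-(i)}, suppose first $u\in \text{int}(K)$, so there is $r>0$ with $u+rB\subseteq K$, where $B$ is the closed unit ball. Given $x\in X$ with $x\neq 0$, the vector $-r x/\|x\|$ lies in $rB$, hence $u - r x/\|x\|\in K$. This says $x\leq \lambda u$ with $\lambda=\|x\|/r$; for $x=0$ any $\lambda>0$ works. Conversely, suppose $u$ is an order unit. Since $\text{int}(K)\neq\emptyset$, pick $w\in \text{int}(K)$ with $w+rB\subseteq K$. By the order unit property applied to $-w$, there is $\lambda>0$ with $\lambda u - (-w)... $; more precisely we obtain $-w\leq \lambda u$, i.e.\ $\lambda u + w\in K$. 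I expect this converse to be the main obstacle, since one needs $u$ itself (not a translate) to be interior. The fix is to apply the order unit property to $-w$ with $\lambda$ such that $\lambda u - w\in K$; this is the order unit property applied to $x=w$. Then $\lambda u = (\lambda u - w) + w$, and adding an element of $K$ to the ball $w+rB\subseteq K$ stays in $K$, because $K+K\subseteq K$. So $\lambda u + rB\subseteq K$, hence $u+(r/\lambda)B\subseteq K$ and $u\in\text{int}(K)$.

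For \ref{prop::int--o.u.-(ii)}, write $v = u + (v-u)$ with $v-u\in K$. If $u+rB\subseteq K$, then $v+rB = (u+rB)+(v-u)\subseteq K+K\subseteq K$, so $v\in\text{int}(K)$. For \ref{prop::int--o.u.-(iv)}, scaling gives $\alpha u+\alpha rB\subseteq \alpha K\subseteq K$.

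For \ref{prop::int--o.u.-(v)}, let $u\in\text{int}(K)$ and let $f\in K'\setminus\{0\}$. Choose $x$ with $f(x)\neq 0$; replacing $x$ by $-x$ if needed, we may assume $f(x)>0$. By \ref{prop::int--o.u.-(i)}, $u$ is an order unit, so $x\leq\lambda u$ for some $\lambda>0$. Then $f(\lambda u - x)\geq 0$, hence $\lambda f(u)\geq f(x)>0$ and $f(u)>0$. This argument avoids any continuity assumption on $f$.

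For \ref{prop::int--o.u.-(iii)}, I use order units instead of topology, since $S$ need not be continuous. Let $u\in\text{int}(K)$, so $u$ is an order unit by \ref{prop::int--o.u.-(i)}. Given $y\in X$, we have $S^{-1}y\leq \lambda u$ for some $\lambda>0$, and positivity of $S$ gives $y\leq\lambda Su$. Moreover $Su\in K\setminus\{0\}$ because $S$ is positive and injective. Hence $Su$ is an order unit, and by \ref{prop::int--o.u.-(i)}, $Su\in\text{int}(K)$. The reverse implication follows by applying the same argument to $S^{-1}\in Aut(K)$.
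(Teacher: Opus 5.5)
Your proof is correct and matches the paper's for (ii), (iv) and (v), with (v) done via order units pulled back through $S^{-1}$; the only differences are that you prove (i) directly where the paper just cites a reference, and that you get (iii) by translating the ball by $v-u\in K$ rather than through order units. One thing to clean up is the converse of (i): drop the false start with $-w$, because the step you actually use is the order-unit property at $x=w$, which gives $\lambda u-w\in K$ and hence $\lambda u+rB=(\lambda u-w)+(w+rB)\subseteq K$.
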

\begin{proof}
\begin{enumerate}[label=\upshape(\roman*)]
    \item 
         Refer \cite[Proposition 1.5.11 (i)]{Anke_book} for the proof of \ref{prop::int--o.u.-(i)}. 
    \item 
        Let $x\in X$. By (i), we know that there exits $\lambda >0$ be such that $-\lambda u\leq x\leq \lambda u$. Since $f\in K'$, we get $-\lambda f(u)\leq f(x)\leq \lambda f(u)$. Hence, if $f(u)=0,$ then $f(x)=0$. Since $x$ is arbitrary, we get that $f=0$. A contradiction.
    \item 
         By (i), this is straightforward.
    \item 
        Fix $r>0$ such that $B_r(u):=\{y\in X\mid \|y-u\|<r\}\subseteq K.$ As $K$ is a cone, it follows that $B_{\alpha r}(\alpha u)\subseteq K$ for every $\alpha>0$.
    \item 
         It is enough to show that $S(u)\in \text{int}(K)$ whenever $u\in \text{int}(K)$, or equivalently, $S(u)$ is an order unit, whenever $u$ is an order unit. Let $u\in K\in \text{int}(K)$ and $x\in X$. By (i), we know that $u$ is an order unit, hence there exits $\lambda >0$ such that $S^{-1}(x)\leq \lambda u$. Since $S$ is positive, we get $x\leq \lambda S(u)$. Therefore $S(u)\in \text{int}(K)$.\qedhere
\end{enumerate}
\end{proof}


\section{Cones with non-empty interior}

We assume throughout that $X$ is a real vector space with a cone $K$. For a linear functional $f$ on $X$, we denote its null space by $\ker (f)$. 

Fix $u\in K \setminus \{0\},~ f\in K'\setminus \{0\}$ and $S\in Aut(K)$. Define a linear map $T\colon X\to X$~as
\begin{equation}\label{genT}
     T(x):=Sx+f(x) u, \quad \forall x\in X.
\end{equation}
In particular, for $S=I$, the identity operator, 
\begin{equation}\label{parT}
    T(x)=x+f(x) u, \quad \forall x\in X.
\end{equation}
Observe that, by definition, $T$ is a positive map. First, in Theorem \ref{thm::invertibility}, we show that the map $T$ as defined in \eqref{genT} is invertible, and then, in Theorem~\ref{thm::main}, we show that, if $u\in \text{int}(K)$, then $T^{-1}$ is not positive. In Theorem \ref{thm::ext_non_empty}, under the assumption that $\text{ext}(K)\neq \{0\}$, we obtain a similar result via a specific choice of $f$ and $u$.

\begin{theorem}\label{thm::invertibility}
    Let $K\subseteq X$ be a cone in a real vector space $X$. For any $u\in K\setminus \{0\}$ and $f\in K'\setminus \{0\}$, the map $T$ as defined in \eqref{genT} is positive and invertible.
\end{theorem}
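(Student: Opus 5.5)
Positivity is immediate: for $x\in K$ we have $Sx\in K$ because $S\in Aut(K)$, and $f(x)\geq 0$ because $f\in K'$. Since $u\in K$ and $K$ is closed under addition and nonnegative scaling, $Tx=Sx+f(x)u\in K$.

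For invertibility I will reduce to the case $S=I$ by writing
\[
T=S(I+f(\cdot)\,S^{-1}u)=S\circ R,\qquad R(x):=x+f(x)w,\quad w:=S^{-1}u .
\]
Since $S^{-1}$ is positive, $w\in K$, and $w\neq 0$ because $u\neq 0$ and $S^{-1}$ is injective. As $S$ is bijective, it suffices to show that $R$ is bijective.

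For $R$ I will use the Sherman--Morrison type formula. The key quantity is $f(w)$, which is nonnegative since $f\in K'$ and $w\in K$. Then $1+f(w)\geq 1>0$, so I can define
\[
R^{-1}(y):=y-\frac{f(y)}{1+f(w)}\,w .
\]
The verification is a direct computation. First,
\[
R(R^{-1}y)=y-\frac{f(y)}{1+f(w)}w+\Bigl(f(y)-\frac{f(y)f(w)}{1+f(w)}\Bigr)w=y,
\]
and symmetrically $R^{-1}(Rx)=x$. Alternatively, injectivity can be checked directly: if $x+f(x)w=0$, applying $f$ gives $f(x)(1+f(w))=0$, so $f(x)=0$ and hence $x=0$. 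Surjectivity then comes from the explicit preimage above.

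Composing, $T^{-1}=R^{-1}S^{-1}$, that is,
\[
T^{-1}(y)=S^{-1}y-\frac{f(S^{-1}y)}{1+f(S^{-1}u)}\,S^{-1}u .
\]
The only step that needs care is ensuring the denominator is nonzero. This is where positivity is used: $w=S^{-1}u\in K$ because $S^{-1}$ is positive, so $f(w)\geq 0$. No topology is needed, which is consistent with the theorem being stated for an arbitrary real vector space. The hypotheses $u\neq 0$ and $f\neq 0$ are not actually needed for invertibility, only for the later non-automorphism claims.
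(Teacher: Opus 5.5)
Your proof is correct and is essentially the paper's argument. The paper also hinges on $1+f(S^{-1}u)\geq 1$ (from $S^{-1}u\in K$ and $f\in K'$), proving injectivity by applying $S^{-1}$ and then $f$, and surjectivity via the explicit preimage $S^{-1}y-\frac{f(S^{-1}y)}{1+f(S^{-1}u)}S^{-1}u$, which is exactly your formula for $T^{-1}$; your factorization $T=S\circ R$ just packages this as the case $S=I$ composed with $S$.
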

\begin{proof}
    As noted earlier, $T$ is positive. Suppose $T(x)=S(x)+f(x)u=0, x\in X$. Applying $S^{-1},$ and then $f$, we get $f(x)+f(x)f(S^{-1}u)=0$, i.e. $(1+f(S^{-1}u))f(x)=0$. Since $f\in K'$ and $S^{-1}\geq 0$, we get $f(x)=0$. This means that $S(x)=0$, and hence $x=0,$ proving the injectivity of $T$. Next, for $y\in X$, choose $x:=S^{-1}y-\lambda S^{-1}u$, where $\lambda:=\frac{f(S^{-1}y)}{1+f(S^{-1}u)}$. Then 
    \[
        T(x)=y-\lambda u+(f(S^{-1}y)-\lambda f(S^{-1}u))u=y-\lambda u+\lambda u=y. \qedhere
    \]
\end{proof}

\begin{remark}\label{spform}
In the proof above, we have shown in particular that, if $T$ is as defined in \eqref{parT}, then $T^{-1}(y)=y-\lambda u,$ where $\lambda:=\frac{f(y)}{1+f(u)}$. We shall make use of this formula in Example \ref{lp}.
\end{remark}

Before stating our main results, we state and prove a few auxiliary results. For $A\subseteq X$, $\text{int}(A)$ denotes the topological interior of $A$.
\begin{lemma}\label{lemma::boundary_lemma}
    Let $A\subseteq X$ be a closed and convex subset with non-empty interior in a normed vector space $X$. Let $u\in \emph{int}(A)$ and $v\notin A$. Consider the map $g\colon [0,1]\to X$, defined as $g(t)=(1-t)u+tv$ for $t\in [0,1]$. Then there exists $c\in (0,1)$ such that $g(c)\in \partial(A)$, the boundary of $A$. Moreover, 
    \begin{center}
        $g(t)\in A$ if $t\leq c,$ while $g(t) \notin A$ for $t>c.$
    \end{center}
\end{lemma}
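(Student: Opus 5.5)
The plan is to define $c:=\sup\{t\in[0,1]\mid g(t)\in A\}$ and to show it has all the required properties. The set $\{t\mid g(t)\in A\}=g^{-1}(A)$ is closed, since $g$ is continuous and $A$ is closed. It is also convex, hence an interval, because $g$ is affine and $A$ is convex. It contains $0$, since $g(0)=u\in A$, and it does not contain $1$, since $g(1)=v\notin A$. So $g^{-1}(A)=[0,c]$ with $c<1$. This already gives the monotone description: $g(t)\in A$ for $t\le c$ and $g(t)\notin A$ for $t>c$.

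Next we check that $c>0$. Since $u\in\text{int}(A)$, there is $r>0$ with $B_r(u)\subseteq A$. We have $\|g(t)-u\|=t\|v-u\|$, and $v\neq u$. Hence $g(t)\in A$ for all $0\le t<r/\|v-u\|$, which forces $c\ge \min\{1, r/\|v-u\|\}>0$. Combined with $c<1$, this gives $c\in(0,1)$.

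Finally we show $g(c)\in\partial(A)$. First, $g(c)\in A=\overline{A}$ because $g^{-1}(A)$ is closed. Second, $g(t)\notin A$ for every $t\in(c,1]$, and $g(t)\to g(c)$ as $t\downarrow c$. So every neighbourhood of $g(c)$ meets $X\setminus A$, and therefore $g(c)\notin\text{int}(A)$. Together these give $g(c)\in\overline{A}\setminus\text{int}(A)=\partial(A)$.

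The argument is routine, and the only step that needs care is identifying $g^{-1}(A)$ as an interval starting at $0$, which is where convexity is used. Written out: if $t\le s$ with $g(s)\in A$, then $g(t)$ is a convex combination of $g(0)=u$ and $g(s)$, namely $g(t)=(1-t/s)u+(t/s)g(s)$, so $g(t)\in A$. Convexity is what excludes the set from re-entering $A$ after leaving it. The interior hypothesis is used only to get $c>0$; in fact $g(t)\in\text{int}(A)$ for $t<c$, though the statement does not need this.
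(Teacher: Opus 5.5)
Your proof is correct and follows essentially the same route as the paper. The paper defines $c=\inf g^{-1}(A^c)$ instead of $\sup g^{-1}(A)$, but it uses the same ingredients: continuity of $g$ with closedness of $A$, the convex-combination identity $g(t)=(1-t/s)u+(t/s)g(s)$, the interior ball to get $c>0$, and approximation from the right of $c$ to place $g(c)$ in $\partial(A)$; identifying $g^{-1}(A)$ as the closed interval $[0,c]$ at the outset is just a tidier way of organizing that argument.
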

\begin{minipage}{0.5\textwidth}
    A geometric description of $g(c):$
\end{minipage}
\hspace*{-0.5in}
\begin{minipage}{0.5\textwidth}
     \begin{center}
        \begin{tikzpicture}[scale=0.8]
            \draw (0,0) circle (25pt);
            \draw (0,-1) circle (0.0001mm) node[anchor=north]{$A$};
            \draw (1, -0.3) circle (0.0001mm) node[anchor=north west]{$g(c)$};
            \draw [very thin, ->] (1.1,-0.4)--(0.9, -0.01);
            \draw (0,0) circle (0.3mm) node[anchor=south west]{$u$};
            \draw (1.8,0) circle (0.3mm) node[anchor=south west]{$v$};
            \draw (0,0)--(0.9,0);
            \draw[color=red] (0.9,0)--(1.8,0);
        \end{tikzpicture}
    \end{center}
\end{minipage}
   
\begin{proof}
    First, observe that $g$ is continuous. Let $c:=\inf g^{-1}(A^c)$. Clearly, $g([0,c])\subseteq A$. Since $A^c$ is an open set, $g^{-1}(A^c)$ is open. Note that $1 \in g^{-1}(A^c).$ Thus $(\delta,1] \subseteq g^{-1}(A^c)$ for some $\delta>0$. Hence, $c<1$. Since $g(0)\in \text{int}(A)$, there exists $\epsilon>0$ such that $B_\epsilon(g(0))\subseteq A$. As $g$ is continuous, there exists $r>0$ such that $g[0, r)\subseteq B_\epsilon (g(0))$. Thus $0<c<1$. Also, notice that, for any $t\in (0,1)$ and $\lambda\in (0, t)$, $g(\lambda)=(1-\frac{\lambda}{t})u+\frac{\lambda}{t}g(t)$. Since $A$ is convex and $u\in A,$ if $g(t)\in A$, then $g((0,t))\subseteq A$. Therefore $g((c,1])\subseteq A^c$. Choose sequences $s_n$ and $t_n$ in $[0,1]$ converging to $c,$ such that $s_n<c<t_n,$ for every~$n$. Then $g(s_n)\in A \not \ni g(t_n),$ and so $g(c)\in \partial(A)$.
\end{proof}

For an ordered vector space $(X,K)$, if $T\in \pi(K)\cap L_{inv}(X)\setminus Aut(K)$, then there exists $x_0\in X\setminus K$ such that $T(x_0)\in K$. Notice that $x_0\in X\setminus (K\cup(-K))$. Thus, a necessary condition for the existence of $T\in \pi(K)\cap L_{inv}(X)\setminus Aut(K)$ is that $X\setminus (K\cup(-K))\neq \emptyset.$ This condition is guaranteed whenever $K$ is a closed cone in a normed vector space $X$ with $\dim X\geq 2$, as we show next.

\begin{lemma}\label{lem::uncomparable_elements}
    Let $K\subseteq X$ be a closed cone in a normed vector space $X$. If  $K\cup (-K)=X$, then $X$ is one dimensional. In other words, if $\dim X\geq 2$, then there exists $x\in X$ such that $x\nleq 0$ and $x\ngeq 0$.
\end{lemma}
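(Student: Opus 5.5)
We argue by contradiction: assuming $K\cup(-K)=X$ and $\dim X\geq 2$, we produce a point lying in neither $K$ nor $-K$. The idea is that $X\setminus\{0\}$ would then be the union of the two disjoint relatively closed sets $K\setminus\{0\}$ and $(-K)\setminus\{0\}$, while removing a point from a normed space of dimension at least two leaves it connected. To keep things concrete, we restrict to a two-dimensional subspace.

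Pick two linearly independent vectors $x,y\in X$ and consider the unit circle in their span,
\[
\gamma(\theta)=\cos\theta\, x+\sin\theta\, y,\qquad \theta\in[0,\pi].
\]
The map $\gamma$ is continuous, and $\gamma(\theta)\neq 0$ for every $\theta$ by linear independence. Moreover $\gamma(\pi)=-x=-\gamma(0)$. Since $K\cup(-K)=X$, we have either $x\in K$ or $x\in -K$. Replacing $x$ by $-x$ if needed (which keeps $x,y$ independent), we may assume $x\in K$. Then $\gamma(0)=x\in K$, and $\gamma(\pi)=-x\in -K$. Because $x\neq 0$ and $K\cap(-K)=\{0\}$, the point $\gamma(\pi)$ does not lie in $K$.

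Now set $c:=\sup\{\theta\in[0,\pi]\mid \gamma(\theta)\in K\}$. Since $K$ is closed and $\gamma$ is continuous, $\gamma^{-1}(K)$ is closed, so $\gamma(c)\in K$ and $c<\pi$. For $\theta\in(c,\pi]$ we have $\gamma(\theta)\notin K$, and hence $\gamma(\theta)\in -K$. Since $-K$ is also closed, letting $\theta\downarrow c$ gives $\gamma(c)\in -K$. Therefore $\gamma(c)\in K\cap(-K)=\{0\}$, which contradicts $\gamma(c)\neq 0$.

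Hence $\dim X\leq 1$, which gives the first statement. The reformulation then follows directly: if $\dim X\geq 2$, some $x$ lies outside $K\cup(-K)$, meaning $x\ngeq 0$ and $x\nleq 0$.

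The only delicate step is the connectedness argument. It needs $\gamma$ to avoid $0$, which is exactly where linear independence (and hence $\dim X\geq 2$) enters, and it needs the closedness of both $K$ and $-K$ so that the boundary point $\gamma(c)$ lies in both sets. The rest is routine. The case $X=\{0\}$ is excluded trivially, since we only need that $\dim X\geq 2$ is impossible.
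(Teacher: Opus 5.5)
Your proof is correct, but it takes a different route from the paper. The paper argues order-theoretically along lines. It first extracts from closedness the Archimedean property ($nx\leq y$ for all $n\in\N$ implies $x\leq 0$). Then, for nonzero $x\geq 0$ and $y$, it sets $P:=\{\alpha>0\mid \alpha x\geq y\}$ and $c=\inf P$, and uses comparability together with closedness on both sides of $c$ to obtain $cx=y$; so any two nonzero vectors are proportional. You instead run an intermediate-value (connectedness) argument along the semicircle $\gamma$ from $x$ to $-x$ in $\text{span}\{x,y\}$. Linear independence is used exactly to keep $\gamma$ away from $0$, and closedness of both $K$ and $-K$ forces the switching point $\gamma(c)$ into $K\cap(-K)=\{0\}$. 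Your version is shorter, needs no case analysis, and isolates the geometric reason behind the lemma: in dimension one every path from $x$ to $-x$ meets $0$. It also avoids a sign issue in the paper's write-up. There, the claim $(c-\frac{1}{n})x\ngeq y$ for every $n$ is deduced from $c-\frac{1}{n}\notin P$, which carries no information when $c-\frac{1}{n}\leq 0$. For $y=-x$ one gets $c=0$ and the claim is false, so one must first normalize $y\geq 0$ as well. The paper's route, in turn, stays purely order-theoretic and exhibits the scalar $c$ with $y=cx$ explicitly. It also produces the Archimedean property as a by-product, which the paper reuses in the Banach-space theorem. Both arguments in fact only use that $K\cap\text{span}\{x,y\}$ is closed.
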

\begin{proof}
    First, we show that, for any $x,y\in X$
    \begin{equation}\label{eq::Archimedean}
        (\forall n\in \N\colon nx\leq y) \implies x\leq 0.
    \end{equation} 
    Indeed, since $K$ is closed, $\frac{1}{n}y-x\in K$ for every $n\in \N$ implies  $-x\in K$. Suppose $X=K\cup (-K),$ so that any two elements of $X$ are comparable. Let $x, y\in X$ be such that $x\neq 0$ and $y\neq 0$. Without loss of generality, let $x\geq 0$. Then, by \eqref{eq::Archimedean}, there exists $N\in \N$ such that $Nx\nleq y$, and so $Nx\geq y$. Hence,  $P:=\{\alpha>0\mid \alpha x\geq y\}\neq \emptyset.$ Consider $c:=\inf P$. There are two cases to consider; $cx\geq y$ and $cx\leq y$. 

    Let $cx\leq y$. Since $c=\inf P$, for every $n\in \N$, there exists $\alpha_n\in P$ such that $c+\frac{1}{n}>\alpha_n>c$, and so $(c+\frac{1}{n})x\geq \alpha_n x\geq y$. Since $K$ is closed, we get $cx\geq y$. Now, let $cx\geq y$. For every $n\in \N$, we get $(c-\frac{1}{n})x\ngeq y$. As any two elements are comparable, it follows that $(c-\frac{1}{n})x\leq y$ for every $n\in \N$. Since $K$ is closed, this implies $cx\leq y$. Thus, in any case, we get $cx=y$. Hence,  $x$ and $y$ are linearly dependent, showing that $X$ is one dimensional.
\end{proof}

\begin{theorem}\label{thm::main}
    Let $X$ be a real normed vector space and let $K\subseteq X$ be a closed cone with $\emph{int} (K)\neq \emptyset$, and $S\in Aut(K)$. Then, for any $f\in K'\setminus\{0\}$ and $u\in \emph{int}(K)$,  the map $T\colon X\to X$, given by,
    \begin{equation*}
        T(x):=Sx+f(x) u, \quad \forall x\in X,
    \end{equation*}
    is positive and invertible, but $T^{-1}$ is not positive.
\end{theorem}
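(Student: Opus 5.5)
Positivity and invertibility of $T$ are already given by Theorem~\ref{thm::invertibility}, so the only task is to exhibit $x_0\notin K$ with $T(x_0)\in K$. As in the proof of Theorem~\ref{thm::invertibility}, $T^{-1}(y)=S^{-1}y-\lambda S^{-1}u$ with $\lambda=\frac{f(S^{-1}y)}{1+f(S^{-1}u)}$. Since $S^{-1}\in Aut(K)$, we have $S^{-1}(T^{-1}y)\dots$; more simply, because $S$ is an automorphism, $T^{-1}y\in K$ if and only if $S(T^{-1}y)=y-\lambda u\in K$. So it suffices to find $y\in K$ such that $y-\lambda u\notin K$.

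Set $w:=S^{-1}u$. By Proposition~\ref{prop::int--o.u.}\ref{prop::int--o.u.-(iii)}, $w\in\text{int}(K)$, and by Proposition~\ref{prop::int--o.u.}\ref{prop::int--o.u.-(v)}, $f(w)>0$. The natural candidate is a $y$ on the boundary of $K$, where subtracting any positive multiple of $u$ leaves the cone. To see this, take $y\in\partial K$ with $y\in K$ (recall $K$ is closed). Then $y-\lambda u\notin K$ whenever $\lambda>0$. Indeed, if $y-\lambda u\in K$, then $y\geq \lambda u$, and $\lambda u\in\text{int}(K)$ by Proposition~\ref{prop::int--o.u.}\ref{prop::int--o.u.-(iv)}. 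Proposition~\ref{prop::int--o.u.}\ref{prop::int--o.u.-(ii)} would then give $y\in\text{int}(K)$, which is a contradiction.

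The remaining requirement is $\lambda>0$, which amounts to $f(S^{-1}y)>0$, so we need a boundary point $y$ with this property. We produce it with Lemma~\ref{lemma::boundary_lemma}. Since $\dim X\geq 2$ is needed (if $\dim X=1$ the claim fails, so we assume this implicitly or treat it separately), Lemma~\ref{lem::uncomparable_elements} gives some $v\notin K$. Apply Lemma~\ref{lemma::boundary_lemma} with $A=K$, interior point $u$ and exterior point $v$. This yields $c\in(0,1)$ with $y:=(1-c)u+cv\in\partial K\cap K$.

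The main obstacle is ensuring $f(S^{-1}y)>0$; a boundary point could have $f(S^{-1}y)=0$. The remedy is to choose $v$ suitably. Replacing $v$ by $v+tu$ for $t>0$ large enough that $f(S^{-1}v)+tf(w)>0$ keeps $v+tu\notin K$ for $t$ below the threshold, but this is not automatic, so instead I would pick $v$ with $f(S^{-1}v)>0$ from the start. The cleanest way is to take $v:=-z$ with $z\notin K\cup(-K)$ and then add $su$, choosing $s$ as the supremum of those $s$ for which $su-z\notin K$. Alternatively, argue directly: if $f(S^{-1}y)=0$ then $f(S^{-1}v)=-\frac{1-c}{c}f(w)<0$, so it is enough to choose $v\notin K$ with $f(S^{-1}v)\geq 0$. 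Since $-(K\cup(-K))\neq X$, we can take $x\notin K\cup(-K)$. Either $f(S^{-1}x)\geq0$ or $f(S^{-1}(-x))\geq0$, and both $x$ and $-x$ lie outside $K$, so one of them serves as $v$. Then $\lambda>0$, and $x_0:=T^{-1}y\notin K$ while $Tx_0=y\in K$, so $T^{-1}$ is not positive.
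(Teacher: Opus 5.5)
Your argument is correct and is essentially the paper's. Both apply Lemma~\ref{lemma::boundary_lemma} on the segment from $u$ to a suitably chosen point outside $K$ to get a boundary point on which the relevant functional ($f$, resp.\ $f\circ S^{-1}$) is strictly positive; you differ only in concluding via the explicit inverse ($T^{-1}$ pushes $y\in\partial K$ out of $K$) rather than interior-preservation ($T$ pushes $x\in\partial K$ into $\text{int}(K)$), and in choosing the exterior point directly instead of by contradiction via $\ker f\subseteq K$. You are also right that $\dim X\geq 2$ is tacitly needed: the statement fails for $X=\R$, $K=\R_+$, and the paper's final contradiction $\ker f\subseteq K$ is only absurd when $\ker f\neq\{0\}$.
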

\begin{proof}
    Suppose there exists $x\in \partial K$ such that $f(x)> 0$, then $S(x) + f(x) u\geq f(x) u$. Thus, by Proposition~\ref{prop::int--o.u.} \ref{prop::int--o.u.-(ii)} and \ref{prop::int--o.u.-(iv)}, we get $T(x)\in \text{int}(K)$. By Proposition~\ref{prop::int--o.u.}\ref{prop::int--o.u.-(iii)}, it then follows that $T\notin Aut(K)$. Hence, it is enough to find $x\in \partial K$ such that $f(x)> 0$. Suppose, on the contrary, that $f(x)=0$ for every $x\in \partial K$. Take any $v\in X\setminus K$. By Lemma~\ref{lemma::boundary_lemma}, there exists $c\in (0,1)$ such that $(1-c)u + cv\in \partial K$. Hence $f(v)=\frac{c-1}{c}f(u)$. By Proposition~\ref{prop::int--o.u.}\ref{prop::int--o.u.-(v)}, we get $f(v)<0$. Hence $\ker f\subseteq K$. This is a contradiction to $K$ being a cone. Thus there exists $x_0\in \partial K$ such that $f(x_0)>0$.
\end{proof}

\begin{remark} The following observations are pertinent. 
\begin{enumerate}
    \item 
        The assumption $u\in \text{int}(K)$, in Theorem \ref{thm::main} is indispensable. For example, let $X=\R^n, K:=\R^n_+, S:=I$ and $u=f:=e_1$. Then $T(x)=(2x_1, x_2,\ldots, x_n), ~x \in \R^n,$ is an automorphism.
    \item 
        The assumption $\text{int}(K)\neq \emptyset$, cannot be dispensed with. Let $X$ be any real vector space and $u\in X\setminus \{0\}.$ Define $K:=\{\lambda u\mid \lambda\geq 0\}$. Then $T\in \pi(K)\cap L_{inv}(X)$ if and only if $T(u)=\lambda u$ for some $\lambda>0.$ Then $T^{-1}(u)=\frac{1}{\lambda} u$, and so $T^{-1}$ is also positive. 
\end{enumerate}
\end{remark}

\begin{remark}
    The converse of Theorem \ref{thm::main} is: for every $T\in (\pi(K)\cap L_{inv}(X))\setminus Aut(K)$, there exist $S\in Aut(K), f\in K'$ and $u\in K$ such that $T$ is given by \eqref{genT}. This is not true in general, as we show next. 
    
    Let $X:=\R^2$ and $K:=\R^2_+$. It is well known that $S\in Aut(X)$ if and only if $S$ is a product of a permutation and a diagonal matrix with positive diagonals. Suppose that 
    \[
        \pi(K)\cap L_{inv}(X)\ni T:=\begin{bmatrix}
        1&3\\
        2&4
    \end{bmatrix} = P D + u v^\top
    \]
    for some permutation matrix $P$, a diagonal matrix $D$ with positive diagonal entries and $u,v\in K$. There are only two permutations to consider. Let $P=I,$ the identity matrix. Then, we have
    \[
        \begin{bmatrix}
        1&3\\
        2&4
    \end{bmatrix}=\begin{bmatrix}
        d_1&0\\
        0&d_2
    \end{bmatrix} + \begin{bmatrix}
        u_1v_1& u_1v_2\\
        u_2v_1& u_2v_2
    \end{bmatrix}.
    \]
    Since $d_i>0$ and $u,v\in K$, we get $u_1v_1< 1$ and $u_2v_2<4$. Hence $u_1v_1u_2v_2=(u_1v_2)(u_2v_1)<4,$ a contradiction to the computation done for the product of the off-diagonal entries. Now, let $P$ be the nonidentity permutation matrix and
     \[
        \begin{bmatrix}
        1&3\\
        2&4
    \end{bmatrix}=\begin{bmatrix}
        0&d_2\\
        d_1&0
    \end{bmatrix} + \begin{bmatrix}
        u_1v_1& u_1v_2\\
        u_2v_1& u_2v_2
    \end{bmatrix}.
    \]
    Solving the four equations, we get $u_1+(d_2-3) u_2=0$ and $(d_1-2)u_1+ u_2=0.$ This system has a nonzero solution $u_1,u_2$ if and only if $(d_2-3)(d_1-2)=1$. This is not possible, since $(d_2-3)(d_1-2)=u_1u_2v_1v_2=4$.
\end{remark}

\section{Cones with (possibly) empty interior}

In this section, we consider cones which may have no interior points. Recall that $\R_+$ denotes the set of all nonnegative real numbers.

\begin{theorem}\label{thm::ext_non_empty}
    Let $K\subseteq X$ be a cone in a real vector space $X$. Let $\dim (\text{span}(K))\geq 2$ and $\emph{ext}(K)\neq\{0\}$. Let $f\in K'$ be such that $f(v)\neq 0$ for some $v\in \emph{ext}(K).$ Then, for any $S\in Aut(K)$ and $u\in K\setminus \R_+ S(v)$, the map $T\colon X\to X$ given by \eqref{genT} is positive, invertible but $T^{-1}$ is not positive.
\end{theorem}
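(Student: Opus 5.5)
Positivity and invertibility of $T$ come directly from Theorem~\ref{thm::invertibility}, since $u\in K\setminus\{0\}$ (note $0\in \R_+S(v)$, so $u\neq 0$) and $f\in K'\setminus\{0\}$ (as $f(v)\neq 0$). It remains to exhibit a vector showing that $T^{-1}$ is not positive. The natural choice is $x:=S^{-1}$ of the extremal direction, or more directly $v$ itself; we will work with $v$.

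The plan is to compute $T(v)=S(v)+f(v)u$. Since $f\in K'$ and $v\in K$, we have $f(v)>0$. Now $S(v)\in \text{ext}(K)$ by Lemma~\ref{lem::inv_image_of_extremal_is_extremal}, and $S(v)\neq 0$ since $v\neq 0$ (because $f(v)\neq0$) and $S$ is injective. Suppose, towards a contradiction, that $T^{-1}$ is positive, so that $T\in Aut(K)$. Then Lemma~\ref{lem::inv_image_of_extremal_is_extremal} applied to $T$ gives $T(v)\in\text{ext}(K)$.

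Next we use extremality of $T(v)$. We have $0\leq f(v)u\leq S(v)+f(v)u=T(v)$, since $S(v)\in K$. Extremality therefore gives $f(v)u=\lambda T(v)$ for some $\lambda\in[0,1]$. Here $\lambda\neq 0$ because $f(v)u\neq 0$, and $\lambda\neq 1$ because otherwise $S(v)=0$. Rearranging yields $(1-\lambda)f(v)u=\lambda S(v)$, i.e. $u=\frac{\lambda}{(1-\lambda)f(v)}S(v)\in\R_+S(v)$, which contradicts the choice of $u$. Hence $T\notin Aut(K)$, and so $T^{-1}$ is not positive.

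The main subtle point is the alternative route, where one also wants $0\leq S(v)\leq T(v)$. That approach gives the same conclusion, so either inequality suffices; the hypothesis $\dim(\text{span}(K))\geq 2$ only serves to guarantee that a $u\in K\setminus\R_+S(v)$ exists, and it is not needed in the argument itself. The only care required is checking the nondegeneracy conditions $\lambda\notin\{0,1\}$ and $f(v)>0$, which are immediate from the facts established above.
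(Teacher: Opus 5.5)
Your proof is correct and follows essentially the same route as the paper: both use Lemma~\ref{lem::inv_image_of_extremal_is_extremal} to force $T(v)$ to be extremal if $T\in Aut(K)$, and then derive the contradiction $u\in\R_+S(v)$. The only difference is cosmetic: the paper compares $T(v)$ with the element $S(v)\le T(v)$, whereas you use $f(v)u\le T(v)$, and you justify $\lambda\in(0,1)$ explicitly where the paper only asserts it.
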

\begin{proof}
    Recall that $T\colon X\to X$ is defined as $T(x)=S(x)+f(x)u$, $x\in X.$  By Theorem \ref{thm::invertibility}, it suffices to show that $T^{-1}$ is not positive. Let $v\in \text{ext}(K)$ be such that $f(v)\neq 0$. Since $S\in Aut(K)$, by Lemma~\ref{lem::inv_image_of_extremal_is_extremal}, $S(v)\in \text{ext}(K)$. If $T(v)=S(v)+f(v)u\in \text{ext}(K)$, then we get $S(v)=\lambda T(v)$ for some $\lambda\in (0,1)$, and hence $u\in \R_+S(v)$, a contradiction. Therefore $T(v)\notin \text{ext}(K)$, thus, by Lemma~\ref{lem::inv_image_of_extremal_is_extremal}, we get $T\notin Aut(K)$.
\end{proof}

\begin{remark} The following observations are pertinent.
\begin{enumerate}
    \item
        The assumption on $f$ in Theorem~\ref{thm::ext_non_empty} is not really restrictive. In fact, if $X$ is a normed vector space with closed cone $K$, then for any $x\in K\setminus\{0\}$, there exists $f\in K'$ such that $f(x)\neq 0$. For, let $f(x)=0$ for every $f\in K'$. Then, by Proposition~\ref{prop::xinKifff(x)>0}, both $x$ and $-x$ are positive, so that $x=0$.
    \item
        Since $\dim (\text{span}(K))\geq 2$, the existence of $u\in K\setminus \R_+ S(v)$ is guaranteed.
    \item
        The assumption of $f$ being non-zero on some extremal is, in general, not equivalent to $f\in K'\setminus \{0\}$. For example, let $X:=\R^2$ and $K:=\{(x,y)\mid x>0~\text{or}~ x=0, y\geq 0\}$. $K$ is called the {\it lexicographic cone}. It is easy to see that $\text{ext}({K})=\{(0,y)\mid y\geq 0\}$. Consider $f\colon X\to \R$ defined as $f(x,y)=x$. Then, $f\in K'$, but $f[\text{ext}(K)]=\{0\}$. However, if $K=\text{hull}(\text{ext}(K))$\footnote{For any ordered vector space $X$ and $\emptyset\neq A\subseteq X$, $\text{hull}(A):=\{\sum_{i=1}^n\lambda_i a_i\mid n\in \N,~ \lambda_i\geq 0,~a_i\in A\}$.} and $\text{span}(K)=X$, then every nonzero positive functional is positive on some extremal. The condition $K=\text{hull}(\text{ext}(K))$ is automatically satisfied, if $X$ is a finite dimensional ordered vector space with a closed cone $K$ such that $\text{span}(K)=X$, due to the Krein-Milman theorem.
\end{enumerate}
\end{remark}

\begin{theorem}\footnote{This result and its proof is suggested by Prof. Dr. Jochen Glück during the discussion of topics from this paper.}\label{thm::Banach-space-case}
    Let $X$ be an ordered Banach space with a closed cone $K$ and $\dim(\text{span}(K)\geq 2)$. Let $S\in Aut(K)$. Then there exists $u\in K$ and $f\in K'$ such that the map $T\colon X\to X$ as in \eqref{genT} is positive, invertible but $T^{-1}$ is not positive.
\end{theorem}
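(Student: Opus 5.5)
The plan is to reduce to the case $S=I$ and then make an explicit choice of $u$ and $f$ by a one-dimensional "sup" argument along a ray. Write $T=S\circ R$, where $R(x):=x+f(x)w$ and $w:=S^{-1}u$. Then $w\in K$ if and only if $u=Sw\in K$. Since $S\in Aut(K)$, the map $T^{-1}=R^{-1}S^{-1}$ is positive if and only if $R^{-1}$ is positive. So it suffices to find $w\in K$ and $f\in K'$ such that $R^{-1}$ is not positive, and then put $u:=Sw$. Invertibility and positivity of $T$ are already given by Theorem~\ref{thm::invertibility}. By Remark~\ref{spform},
\[
R^{-1}(y)=y-c\,w,\qquad c=\frac{f(y)}{1+f(w)}.
\]
The task is therefore to produce $y\in K$ with $y-cw\notin K$.

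\textbf{Construction.} Since $\dim(\text{span}(K))\geq 2$, pick linearly independent $y,w\in K$. Set
\[
s^*:=\sup\{s\geq 0\mid y-sw\in K\}.
\]
This supremum is finite. Otherwise $\frac1s y-w\in K$ for arbitrarily large $s$, and closedness of $K$ gives $-w\in K$, so $w=0$, which contradicts linear independence. By closedness again, $z:=y-s^*w\in K$. Moreover $z\neq 0$, since otherwise $y=s^*w$ would contradict independence. By Remark~(1) after Theorem~\ref{thm::ext_non_empty}, which relies on Proposition~\ref{prop::xinKifff(x)>0} and hence on Hahn--Banach separation for the closed cone $K$, there is $f_0\in K'$ with $f_0(z)>0$.

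\textbf{Scaling.} Now take $f:=tf_0$ with $t>0$ to be chosen. The condition $c>s^*$ reads
\[
tf_0(y)>s^*+ts^*f_0(w),
\]
that is,
\[
t\,f_0(z)>s^*.
\]
This holds for all sufficiently large $t$, because $f_0(z)>0$. For such $t$ we have $c>s^*$, so by the definition of $s^*$ we get $R^{-1}(y)=y-cw\notin K$ while $y\in K$. Hence $R^{-1}$, and therefore $T^{-1}$, is not positive. Note also that $f\neq 0$, since $f_0(y)\ge f_0(z)>0$ (because $s^*f_0(w)\ge 0$).

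\textbf{Main obstacle.} The key step is securing a positive functional that is strictly positive on the specific vector $z=y-s^*w$ on the boundary of the ray. This is exactly where closedness of $K$ and a Hahn--Banach-type result (Proposition~\ref{prop::xinKifff(x)>0}) are needed. The rest is the elementary scaling of $f$ to push the coefficient $c$ past $s^*$. The argument seems to use only normedness rather than completeness, so the Banach hypothesis would not be needed in any essential way.
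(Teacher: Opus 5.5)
Your proof is correct, and it takes a genuinely different route from the paper's.

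The paper relies on an external theorem of Gl\"uck, which gives $u\in K\setminus\{0\}$ and $f\in K'\setminus\{0\}$ with $f(u)=0$. This orthogonality collapses the inverse of $T_n(x)=Sx+n\,f(Sx)\,u$ to $T_n^{-1}(x)=S^{-1}x-n\,f(x)\,S^{-1}u$, which is affine in $n$. So for any $y\in K$ with $f(y)>0$, the Archimedean property \eqref{eq::Archimedean} forces $T_n^{-1}y\notin K$ for large $n$.

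You instead reduce to $S=I$ via $T=S\circ R$ and locate the exit point $z=y-s^*w\in K\setminus\{0\}$ of the ray $s\mapsto y-sw$. You then need only Proposition~\ref{prop::xinKifff(x)>0} to get $f_0\in K'$ with $f_0(z)>0$. The equivalence $c>s^*\iff t f_0(z)>s^*$ is what replaces $f(u)=0$. If $f_0(w)>0$, your $c$ stays bounded as $t\to\infty$, tending to $f_0(y)/f_0(w)$, and $f_0(z)>0$ says exactly that this limit exceeds $s^*$.

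The paper's argument is shorter once the citation is granted and yields a clean inverse formula, but it depends on that external result, which is the only place completeness enters. Your argument stays within the paper's own tools and, as you observe, never uses completeness. So it proves the statement for every normed space with a closed cone and $\dim(\text{span}(K))\geq 2$.

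Your argument also gives more freedom in $u$. Every nonzero $w\in K$ has a partner $y\in K\setminus\R w$, and $S$ maps $K$ onto $K$, so your construction works for every $u\in K\setminus\{0\}$, including interior points. The paper's $u$ can never be interior: by Proposition~\ref{prop::int--o.u.}, $f(u)=0$ with $f\in K'\setminus\{0\}$ is impossible for $u\in\text{int}(K)$.
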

\begin{proof}
    By \cite[Theorem 2.10]{alomost-interior-Jochen}, there exists $u\in K\setminus\{0\}$ and $f\in K'\setminus\{0\}$ such that $f(u)=0$. For $n\in \N$, define $T_n\colon X\to X$ as
    \[
        T_n(x)= S(x) + n (S'f)(x) u, \quad \forall x\in X,
    \]
    where $S'\colon X'\to X'$ is the algebraic dual operator, i.e. $S'\colon f\mapsto f\circ S$. Since, $S\in Aut(K)$, it follows that $S'f\in K'\setminus\{0\}$. Hence, by Theorem~\ref{thm::invertibility}, we get that $T_n$ is positive and invertible for each $n\in \N$. Further, it can be verified that
    \[
        T_n^{-1}(x)= S^{-1}(x) - nf(x) S^{-1}u, \quad \forall x\in X.
    \]
    Since $f\neq 0$, there exists $y\in K$ such that $f(y)>0$. If $T_n^{-1}(y)\geq 0$ for every $n\in \N$, then
    \[
        n (f(y)S^{-1}u)\leq S^{-1}y, \quad \forall n\in \N.
    \]
    Hence, by \eqref{eq::Archimedean}, we get $S^{-1}u\leq 0$. This is the contradiction to $u\in K\setminus \{0\}$, $S^{-1}\geq 0$ and $K\cap(-K)=\{0\}$. Hence, there exists $N\in \N$ such that $T_N^{-1}$ is not positive. This completes the proof.
\end{proof}

\section{Illustrative examples}

\begin{example}\label{hyp} 
Let $(\calH, \langle \cdot ,\cdot\rangle_{\calH})$ be a Hilbert space. Consider the space 
\[
    \calH\oplus \R:=\{(x,\alpha)\mid x\in \calH, ~\alpha\in \R\},
\]
called a {\it spin factor} of $\calH$. $\calH \oplus \R$ is again a Hilbert space with the inner product $\langle (\alpha, x), (\beta, y)\rangle:=\alpha \beta +\langle x,y\rangle_{\calH}$. Define $$K:=\{(x, \alpha)\in \calH\oplus \R \mid \alpha\geq \|x\|\}.$$ It is well known \cite[Page 43]{Anke_book} that $K$ is a closed cone with non-empty interior (e.g. $({\bf 0},1)\in \text{int}(K)$). $K$ is called the {\it hyperbolic cone or the Lorentz cone.}

Let ${\bf 0}\neq \hat{x}\in \calH$ be such that $\|\hat{x}\|=1$, and fix $u:=({\bf 0},1)$. It is clear that $(\hat{x}, 0)\ngeq {\bf 0}$ and $(\hat{x}, 0)\nleq {\bf 0}$. 
Consider the linear map $f\colon \calH\oplus \R\to \R$ defined as 
\[
    f(x,\alpha):=\alpha + \left\langle x, \hat{x}\right\rangle_{\calH}.
\]
Let $(x,\alpha)\in K$. By the Cauchy-Schwarz inequality, we get $|\langle x, \hat{x}\rangle_\calH|\leq \|x\|\leq \alpha$, so that $f(x,\alpha)\geq 0$. Thus, $f\in K'$. Note that, $f(\hat{x},0)=1$. Consider the map $T\colon \calH\oplus \R\to \calH\oplus \R$ defined  as 
\[
    T(x,\alpha)=(x,\alpha)+f(x,\alpha)({\bf 0},1)=(x, \alpha + f(x,\alpha)).
\]
By Theorem \ref{thm::invertibility}, $T$ is positive and invertible. The fact that $T(\hat{x}, 0)=(\hat{x}, 1) \in K$ with $(\hat{x},0)\notin K,$ shows that $T^{-1}$ is not positive.
\end{example}

\begin{example}
Let $X:=(C[0,1], \|\cdot \|_\infty)$ be the space of all continuous real valued functions on $[0,1]$, and $K:=C_+[0,1]\subseteq X$ be the cone of all point-wise nonnegative continuous functions. Then, $\text{int}(K)\neq \emptyset,$ while $\text{ext}(K)=\{0\}.$ Let $u:=\mathbbm{1}$ denote the constant $1$ function, so that $u\in \text{int}(K)$.  Let $f\in K'$ be defined by $f(x)=\int_{0}^1 x(t) dt$, $x\in X$. Let $T\colon C[0,1]\to C[0,1]$ be defined as 
\[
    T(x)(t)=x(t)+f(x)u(t), \quad t\in [0,1], ~x\in C[0,1].
\]
By Theorem \ref{thm::invertibility}, $T$ is positive and invertible. We claim that $T^{-1}$ is not positive. Indeed, consider $x\in X$ defined as follows:

\begin{minipage}{0.5\textwidth}
    \[
    x(t):=\begin{cases}
        20t, &0\leq t\leq \frac{1}{4}\\
        10-20t, &\frac{1}{4}\leq t\leq \frac{1}{2}\\
        2-4t, & \frac{1}{2}\leq t\leq \frac{3}{4}\\
        4t-4, &\frac{3}{4}\leq t\leq 1.
    \end{cases}
\]
\end{minipage}
\hspace{0.1cm}
\begin{minipage}{0.5\textwidth}
\begin{figure}[H]
\centering
   \begin{tikzpicture}[scale=0.5]
        \begin{axis}[axis lines=middle, ymax=5.1, ymin=-1.1, xmax=1.1, xmin=0, xtick={0,1/4,1/2,3/4,1}, xticklabels={$0$,$\frac{1}{4}$,$\frac{1}{2}$,$\frac{3}{4}$,$1$}, ytick={-1,0,1,2,3,4}, yticklabels={$-1$, $0$, $1$, $2$, $3$, $4$}, clip=false, ylabel=$x(t)$, xlabel=$t$]
        \draw[color=red]{(0,0)--(1/4,5)--(1/2,0)--(3/4,-1)--(1,0)};
    \end{axis}
    \end{tikzpicture}
\end{figure}
\end{minipage}

\vspace{2pt}

Then $x\geq -\mathbbm{1}, ~f(x)=1, ~x\ngeq 0$ and $x\nleq 0$. 
Thus, $T(x)=x+u\in K,$ with $x\notin K$, showing that $T^{-1}$ is not positive.    
\end{example}

\begin{example}[The copositive and the positive semidefinite cones]

For $n\geq 2$, let $S^n$ be the space of all $n\times n$ real symmetric matrices. The cone $K_1:=\{A\in S^n\mid \langle Ax,x\rangle \geq 0,~\forall x\in \R^n\}$ is called the {\it Loewner cone} or the {\it positive semidefinite} cone, and $K_2:=\{A\in S^n\mid \langle Ax,x\rangle \geq 0,~\forall x\in \R^n_+\}$ is called the {\it copositive cone}. Let $u:=I$ denote the identity matrix. It is not difficult to see that $I\in \text{ int}(K_1)\cap \text{int}(K_2)$. 
Let $f\colon S^n\to S^n$ be the trace functional. It is clear that $f\in K_1'\cap K_2'$. Consider the map $T\colon S^n\to S^n$ defined as $T(A)=A+f(x)I$ for every $A\in S^n$. By Theorem~\ref{thm::invertibility}, for both the cones $K_1$ and $K_2$, $T$ is positive and invertible. If $D:=\text{diag}(-\frac{1}{2},1,\ldots, 1)\in S^n,$ then $D\notin K_1\cup K_2$ and $T(D)\in K_1\cap K_2$, proving that $T^{-1}$ is not positive. 

\end{example}


\begin{example}\label{lp}
Consider the space $(\ell^p, \|\cdot \|_p)$ of $p$ summable real sequences with the cone $\ell^p_+$ of all sequences in $\ell^p$ with nonnegative coordinates. Let $e_i$ denote the $i^{th}$ standard unit vector. It is well known that $\text{int}(\ell^p_+)=\emptyset$. However, we include a proof for ready reference. Let $x=(x_n)\in \ell^p_+$ and $\epsilon>0$. There exists $m\in \N$ such that $|x_m|<\frac{\epsilon}{2}$. Define $\hat{x}:= x- (2x_m) e_m \in \ell^p$. Then $\|x-\hat{x}\|=2 |x_m|~<\epsilon$, but $\hat{x}\ngeq 0$. Hence $x\notin \text{int}(\ell^p_+)$. Also, $\text{ext}(\ell^p_+)=\{\lambda e_i\mid \lambda\geq 0, ~i\in \N\}$. For, let $x=(x_n)\in \ell^p_+$ be such that $x_i\neq 0$ and $x_j\neq 0$ for $i\neq j$. Then $\frac{x_i}{2}e_i\leq x$ but $e_i$ and $x$ are linearly independent.  

Let $0\neq f\colon \ell^p\to \R$ be a positive functional. Let $f(e_i)\neq 0$ for some $i\in \N$. For $u:=e_j\in \ell^p_+$ with $j\neq i$, define $T\colon \ell^p\to \ell^p$, by $T(x)=x+f(x)u$, $x\in \ell^p$. By Theorem~\ref{thm::invertibility}, $T$ is positive and invertible. By applying Remark \ref{spform}, $T^{-1}e_i= e_i - \lambda e_j$, where $\lambda=\frac{f(e_i)}{1+f(e_j)}$, proving that $T^{-1}$ is not positive.
\end{example}

Our concluding example considers a  ordered vector space where both $\text{int}(K)$ and $\text{ext}(K)\setminus \{0\}$ are empty.

\begin{example}
Let $X:=(C_0(\R), \|\cdot \|_\infty)$ be the space of all continuous functions vanishing at $\pm \infty$, and $K\subseteq X$ be the cone of all point-wise nonnegative functions. It is not difficult to see that $\text{int}(K)=\emptyset$ and $\text{ext}(K)=\emptyset$. Hence, Theorem \ref{thm::main} and Theorem \ref{thm::ext_non_empty} are not applicable. However, we can still construct $T\in \pi(K)\cap L_{inv}(X)\setminus Aut(K)$ using our method.

Let $f\colon X\to \R$ be the point evaluation functional at $0$, and $u\in X$ be defined as $u(t)=e^{-|t|}, ~t\in \R$. Define the linear map $T\colon X\to X$ as 
\[
    Tx(t)=(x+f(x)u)(t)=x(t)+x(0)u(t), \quad x\in X, ~t\in \R.
\]
By Theorem~\ref{thm::invertibility}, $T$ is positive and invertible. To show that $T^{-1}$ is not positive, it suffices to show that there exists $x\in X$ such that $x(0)=1$, $x\ngeq 0$, $x+u\geq 0$. Given below is one such $x$.
\begin{figure}[H]
    \centering
    \begin{tikzpicture}[scale=0.7] 
\begin{axis}[axis lines=middle, ymax=1.2, ymin=-1.2, xmax=2, xmin=-2, xtick={-1,-1/2,0,1/2,1}, xticklabels={$-1$, , , , $1$}, clip=false, domain=-2:2]
    
        \addplot[color=blue, samples=1000, dashed]{-exp(-abs(x)} node[right,pos=0.8]{$-e^{-|t|}$};
        \draw[color=red]{(-2,0)--(-1,0)--(-1/2, -1/e)--(0,1)--(1/2,-1/e)--(1,0)--(2,0)};
    \end{axis}
\end{tikzpicture}
\label{fig:placeholder}
\end{figure}

\end{example}

\textbf{Acknowledgment:} 
The authors thank Prof. Apoorva Khare for carefully reading the manuscript and providing useful suggestions. Pavankumar thanks Prof. Dr. Jochen Glück for discussing the topics of this article. The first author acknowledges funding received from the Prime Minister’s Research Fellowship (PMRF), Ministry of Education, Government of India, for his entire PhD duration.

\end{document}